\let\C\@undefined \makeatother % hyperref's "unicode" option defines \C as "COMBINING DOUBLE GRAVE ACCENT"
\theoremstyle{plain}
\newtheorem{theorem}{Theorem}
\newtheorem{proposition}[theorem]{Proposition}
\newtheorem{lemma}[theorem]{Lemma}
\newtheorem{corollary}[theorem]{Corollary}
\newcommand{\lin}{\mathrm{lin}}
\renewcommand{\epsilon}{\varepsilon}
\newcommand{\N}{\mathbb{N}}
\newcommand{\C}{\mathbb{C}}
\newcommand{\R}{\mathbb{R}}
\DeclareMathOperator*{\Shift}{Shift}
\DeclareMathOperator*{\Id}{Id}
\title{Typical operators admit common cyclic vectors}
\author{Pavel Zorin-Kranich}
\address{Korteweg de Vries Instituut voor Wiskunde\\
Universiteit van Amsterdam\\
P.O.\ Box 94248\\
1090 GE Amsterdam\\
The Netherlands}
\email{zorin-kranich@uva.nl}
\keywords{Cyclic vector, hypercyclic vector, weakly supercyclic vector, typical operator}
\subjclass[2010]{Primary 47A16}
\begin{document}
\begin{abstract}
Given a countable dense subset $D$ of an infinite-dimensional separable Hilbert space $H$ the set of operators for which every vector in $D$ except zero is hypercyclic (weakly supercyclic) is residual for the strong (resp. weak) operator topology in the unit ball of $L(H)$ multiplied by $R>1$ (resp. $R>0$).
\end{abstract}
\maketitle

\section*{Introduction}
We show that a typical (in the Baire category sense for the strong operator topology) operator (with norm bounded by $R>1$) on a separable Hilbert space admits a prescribed countable set of hypercyclic vectors. For the weak operator topology an analogous statement holds with ``weakly supercyclic'' in place of ``hypercyclic''.

Existence proofs using the Baire category theorem have a rich history.
A classical example is the fact that a typical (in the Baire category sense) continuous functions on an interval is nowhere differentiable, due to Banach and Mazurkiewicz.
In ergodic theory it is well-known that a typical measure-preserving transformation is weakly but not strongly mixing, a result due to Halmos \cite{MR0011173} and Rohlin \cite{MR0024503}.

The following notation is used throughout the article.
\begin{itemize}
\item $H$ is a separable infinite-dimensional Hilbert space with scalar product $\langle\cdot, \cdot\rangle$.
\item $D \subset H\setminus\{0\}$ is a countable dense subset.
\item $B_R = \{ T \in L(H) : ||T|| \leq R \}$ is the closed ball of radius $R$ in the space of bounded linear operators on $H$.
\end{itemize}
For a linear operator $T$ on $H$, a vector $x\in H$ is called \emph{cyclic} if the span of the orbit $\lin \{x,Tx,T^2x,\ldots\}$ is dense in $H$, \emph{(weakly) supercyclic} if $\C \{x,Tx,T^2x,\ldots\}$ is (weakly) dense in $H$ and \emph{(weakly) hypercyclic} if the orbit itself $\{x,Tx,T^2x,\ldots\}$ is (weakly) dense in $H$.
An operator is called cyclic, resp. (weakly) supercyclic, resp. (weakly) hypercyclic if it admits a cyclic, resp. (weakly) supercyclic, resp. (weakly) hypercyclic vector.

Note that (weak) hypercyclicity implies (weak) supercyclicity that implies cyclicity.
Also, the norm of a hypercyclic operator is necessarily greater than one.
An example of a hypercyclic operator is $R$ times the left shift on $l^2(\N)$ for any $R>1$.

For convenience let us point out some known properties of hypercyclic operators.
By Birkhoff's transitivity theorem \cite{MR1555175} hypercyclicity is equivalent to topological transitivity.
Every hypercyclic operator admits a residual set of hypercyclic vectors, see for instance \cite{MR1685272}.
Furthermore, the vectors that are hypercyclic for an operator $T$ are also hypercyclic for every positive power $T^{n}$ \cite{MR1319961}.
General criteria for a family of operators to possess common hypercyclic vectors may be found in \cite{MR2317538} and \cite{MR2545665}.
The book by Bayart and Matheron \cite{MR2533318} is an excellent source of information.

\section*{Statement of results}
\begin{theorem}
\label{thm:R-strong}
Let $R>1$.
Then the subset $L_{HC}(D)$ of operators $T$ for which every $x\in D$ is hypercyclic is dense $G_\delta$ in the ball $B_R$ for the strong operator topology.
\end{theorem}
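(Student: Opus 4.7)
The plan is to express $L_{HC}(D)$ as a countable intersection of SOT-open sets, prove each is dense, and conclude by Baire category. The Baire framework applies because $B_R$ equipped with the SOT is Polish: separable (since $H$ is) and complete (the bound $\|T\|\le R$ is preserved under SOT limits). Fix a countable base $\mathcal{V}$ of nonempty open subsets of $H$; since $x$ is hypercyclic for $T$ iff for every $V\in\mathcal{V}$ some $T^n x$ lies in $V$, one has
\[
L_{HC}(D)=\bigcap_{x\in D,\,V\in\mathcal{V}} U_{x,V},\qquad U_{x,V}:=\bigcup_{n\ge 0}\{T\in B_R:T^n x\in V\}.
\]
Each $U_{x,V}$ is SOT-open: the telescoping identity $T_k^n x - T^n x = \sum_{j=0}^{n-1} T_k^j (T_k - T) T^{n-1-j} x$ together with the uniform bound $\|T_k\|\le R$ shows that $T\mapsto T^n x$ is SOT-continuous on $B_R$.

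The heart of the argument is the density of each $U_{x,V}$. Given $T_0\in B_R$, a basic SOT-neighborhood $\mathcal{N}=\{T:\|T y_j-T_0 y_j\|<\epsilon,\ j=1,\dots,k\}$, $x\in D$, and nonempty open $V\subset H$, I would construct $T\in\mathcal{N}\cap B_R$ with $T^n x\in V$ for some $n$. Let $F=\lin\{y_1,\dots,y_k\}$ and $W=F+T_0(F)$, both finite-dimensional. The construction sets $T|_F=T_0|_F$ (so $\mathcal{N}$ is met exactly) and defines $T$ on $F^\perp$ as an $R$-scaled shift along an orthonormal sequence $e_0,\dots,e_n$ chosen inside $W^\perp$, zero on the remaining orthogonal complement; orthogonality of the shift range to $T_0(F)$ ensures $\|T\|\le R$. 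A direct induction yields
\[
T^n x = T_0 (P_F T_0)^{n-1} x_F + R^n \langle x, e_0\rangle\, e_n,
\]
where $x_F = P_F x$. Picking $v\in V$ with $P_{W^\perp}v$ nonzero and not parallel to $P_{W^\perp}x$ (generic in the open $V$ since $H$ is infinite-dimensional), setting $e_n$ along $P_{W^\perp}v$, and choosing $e_0\in W^\perp\cap e_n^\perp$ so that $\langle x,e_0\rangle = \|P_{W^\perp}v\|/R^n$ (feasible for $n$ large since $R>1$), the second summand equals $P_{W^\perp}v$ exactly.

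The main obstacle is the residual term $T_0 (P_F T_0)^{n-1} x_F \in W$, which is completely determined by $T_0$, $x_F$, and $n$, and which in general will not approximate $P_W v$ for any $v\in V$. It must be absorbed by exploiting the $\epsilon$-slack in $\mathcal{N}$: replacing the rigid choice $T|_F=T_0|_F$ by a perturbation $Ty_j=T_0 y_j+\delta_j$ with $\|\delta_j\|<\epsilon$ changes the residual by an amount depending polynomially on the $\delta_j$'s, and for $n$ at least the cyclicity index of $x_F$ under the map induced on $F$ this polynomial map has full rank, so the residual can be steered to match $P_W v$ for an appropriate $v\in V$, producing $T^n x = v$. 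Degenerate cases ($x\in F$, $P_{W^\perp}x=0$, or $v$ in a bad position) are handled by a preliminary SOT-small perturbation of $T_0$ placing things in general position. Once density of every $U_{x,V}$ is established, Baire category on the SOT-Polish space $B_R$ delivers the dense $G_\delta$ conclusion.
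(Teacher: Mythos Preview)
Your Baire-category framework is exactly the paper's: the $G_\delta$ decomposition of $L_{HC}(D)$ and the SOT-continuity of $T\mapsto T^n x$ on $B_R$ are correct and match the paper's Proposition~\ref{prop:residual-strong-hypercyclic}. The gap is in the density step, precisely at the point you flag as ``the main obstacle.'' Your proposed fix---perturbing $T|_F$ by $\delta_j$'s of norm $<\epsilon$ and invoking a ``full rank'' polynomial map to steer the residual $T_0(P_FT_0)^{n-1}x_F$ to $P_Wv$---does not work as stated. Full rank of the Jacobian only yields local surjectivity near the \emph{unperturbed} residual value, and that value can be enormous: if, say, $P_FT_0|_F$ has spectral radius $R>1$ (e.g.\ $T_0$ close to $R\cdot\Id$), then the residual has size of order $R^{n}\|x_F\|$, while $P_Wv$ lies in a fixed bounded set. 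An $\epsilon$-perturbation of $T|_F$ cannot move the residual from $\sim R^n$ down to something bounded; the polynomial dependence on the $\delta_j$'s does not help, because the relevant coefficients are themselves of size $R^{n}$, so the image of the $\epsilon$-ball is a set of diameter $\sim R^n\epsilon$ centered near $R^n x_F$, not near $P_Wv$. The vague appeal to ``cyclicity index'' and ``general position'' does not address this quantitative mismatch.

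The paper resolves this with a different mechanism (Lemma~\ref{lemma:approx-strong-hypercyclic}): instead of trying to steer the $W$-residual, it lets the shift run with multiplier $R+\delta$ (strictly larger than $R$) so that after $M$ steps the shift component $\gamma(R+\delta)^M$ dominates the residual $(TP)^{M+1}x$, which is only $O(R^{M+1})$. Then a single rank-one correction $\frac{1}{\gamma(R+\delta)^M}\phi\otimes(\tilde z-(TP)^{M+1}x)$ at the last shift vector $e_M$ sends $\tilde S^{M+1}x$ exactly to $\tilde z$; the correction has small operator norm because of the exponential gap $(R+\delta)^M\gg R^{M+1}$. A final rescaling by $R/(R+3\delta)$ brings everything back into $B_R$. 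The essential idea you are missing is this competition of growth rates: one must arrange the shift to outgrow the finite-dimensional dynamics, and then cancel the residual by a correction whose cost is controlled by the ratio.
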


Writing $B_R =\frac{R}{2}B_2$, applying Theorem~\ref{thm:R-strong} and noting that every multiple of a hypercyclic operator is supercyclic we obtain the following result.
\begin{corollary}
\label{cor:one-strong}
Let $R>0$.
Then the subset $L_{SC}(D)$ of operators $T$ for which every $x\in D$ is supercyclic is dense $G_\delta$ in the set $B_R$ for the strong operator topology.
For every such $T$ the set of supercyclic vectors is residual in $H$ for the norm topology.
\end{corollary}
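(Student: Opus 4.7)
The plan is to deduce the corollary from Theorem~\ref{thm:R-strong} by a rescaling argument, and to obtain residuality of supercyclic vectors in $H$ from the density of $D$ via a short Baire argument.

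For the first assertion I would first verify that $L_{SC}(D)$ is $G_\delta$ in the strong operator topology. Fixing a countable base $\{U_k\}$ of the topology of $H$ and a countable dense set $\Lambda\subset\mathbb{C}$, one has
\[
L_{SC}(D)=\bigcap_{x\in D}\bigcap_{k}\bigcup_{n\ge 0,\ \lambda\in\Lambda}\{T\in B_R:\lambda T^{n}x\in U_k\},
\]
and each set in the innermost union is SOT-open on $B_R$ because $T\mapsto T^{n}x$ is SOT-continuous on bounded subsets of $L(H)$. Density is obtained by applying Theorem~\ref{thm:R-strong} with $R$ replaced by $2$: then $L_{HC}(D)$ is dense $G_\delta$ in $B_2$, and since $S\mapsto \tfrac{R}{2}S$ is an SOT-homeomorphism $B_2\to B_R$, the image $\tfrac{R}{2}L_{HC}(D)$ is dense in $B_R$. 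If $S$ is hypercyclic at every $x\in D$ and $T=\tfrac{R}{2}S$, then $\mathbb{C}\{T^{n}x:n\ge 0\}=\mathbb{C}\{S^{n}x:n\ge 0\}$ contains the dense $S$-orbit of $x$; hence $x$ is supercyclic for $T$, and $\tfrac{R}{2}L_{HC}(D)\subseteq L_{SC}(D)$.

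For the residuality assertion fix $T\in L_{SC}(D)$ and the same basis $\{U_k\}$. The set of supercyclic vectors of $T$ equals
\[
\bigcap_{k}\bigcup_{n\ge 0,\ \lambda\in\Lambda}\{x\in H:\lambda T^{n}x\in U_k\},
\]
and each summand is norm-open in $x$. By hypothesis every element of the dense set $D$ lies in each of these open sets, so each is dense; Baire's theorem finishes the argument.

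The only real care needed is in formulating supercyclicity as countably many open conditions—restricting $\lambda$ to the countable dense set $\Lambda$—so that the $G_\delta$ structure is manifest. Otherwise the substantive work is entirely carried by Theorem~\ref{thm:R-strong}, and the corollary is a direct deduction from it together with the density of $D$ used twice: once to establish density of $L_{SC}(D)$ in $B_R$, and once to establish density of the set of supercyclic vectors in $H$.
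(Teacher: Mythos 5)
Your proof is correct and follows essentially the same route as the paper: the density in $B_R$ comes from rescaling the dense $G_\delta$ set $L_{HC}(D)\subset B_2$ given by Theorem~\ref{thm:R-strong} via the SOT-homeomorphism $S\mapsto\frac{R}{2}S$, and the residuality of supercyclic vectors is the same Baire argument with $M_k=\cup_{n,\lambda}\{x:\lambda T^nx\in U_k\}$. Your explicit verification that $L_{SC}(D)$ is itself $G_\delta$ is a welcome point of rigor the paper leaves implicit (and the restriction to a countable $\Lambda$ is not even needed, since an arbitrary union of open sets is open).
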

Here, the last statement is obtained by a slight modification of the standard argument for hypercyclic vectors.

A strongly typical contraction on a Hilbert space is unitarily equivalent to the left shift operator on $\ell^{2}(\N,\ell^{2}(\N))$ by a result of Eisner and Mátrai \cite[Theorem 2.10]{em2010}. In particular, it is supercyclic. Corollary~\ref{cor:one-strong} shows that strongly typical contractions even have \emph{common} supercyclic vectors.

\begin{theorem}
\label{thm:one-weak}
Let $R>0$.
Then the subset $L_{wSC}(D)$ of operators $T$ for which every $x\in D$ is weakly supercyclic is dense $G_\delta$ in the ball $B_R$ for the weak operator topology.
For every such $T$ the set of weakly supercyclic vectors is residual in $H$ for the norm topology.
\end{theorem}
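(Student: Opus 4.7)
The plan is to adapt the proof of Theorem~\ref{thm:R-strong}, with two changes: weak supercyclicity replaces hypercyclicity (so scalar multiples of orbit vectors are admissible), and the weak operator topology replaces the strong. The latter is the more delicate change, since the iteration $T\mapsto T^{n}x$ is WOT-continuous only for $n\le 1$. Since $D$ is countable, $L_{wSC}(D)=\bigcap_{x\in D}L_{wSC}(x)$; as $(B_{R},\mathrm{WOT})$ is compact and metrizable, Baire's theorem reduces the problem to showing that each $L_{wSC}(x)$ is residual.

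Fixing a countable family $\{U_{k}\}$ of nonempty basic weakly open subsets of $H$ such that weak density in $H$ is equivalent to meeting every $U_{k}$, one has $L_{wSC}(x)=\bigcap_{k}B_{x,k}$ where $B_{x,k}=\{T\in B_{R}:\exists\,n\in\N,\lambda\in\C,\ \lambda T^{n}x\in U_{k}\}$. The main obstacle is showing each $B_{x,k}$ is WOT-open (or at least that their intersection is WOT-comeager), which is not automatic given the non-continuity of $T\mapsto T^{n}x$ for $n\ge 2$. The existence of the scalar $\lambda$ is crucial: writing $U_{k}=\{y:|\langle y-y_{0},v_{i}\rangle|<\epsilon,\ i=1,\ldots,m\}$, the condition ``$\exists\lambda$ with $\lambda T^{n}x\in U_{k}$'' asks only that the complex line through $(\langle T^{n}x,v_{i}\rangle)_{i}\in\C^{m}$ pass within $\epsilon$ of $(\langle y_{0},v_{i}\rangle)_{i}$, a projective condition that, combined with the union over $n$, should yield a WOT-open set. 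I expect this verification to require the most care.

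For density, I would observe that a basic WOT-neighborhood $V$ of $T_{0}\in B_{R}$ constrains only finitely many matrix entries of $T$, leaving the action on a cofinite-dimensional subspace essentially free. On that subspace one can insert a weakly mixing building block such as a weighted shift on a copy of $\ell^{2}(\N)$ and, via an enumeration of $D$ together with the $U_{k}$, arrange iteratively that every $x\in D$ becomes weakly supercyclic. The greater looseness of the WOT versus the SOT, and of weak supercyclicity versus hypercyclicity, together explain why any $R>0$ suffices here, in contrast to $R>1$ for Theorem~\ref{thm:R-strong}.

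For the concluding assertion, fix $T\in L_{wSC}(D)$ and write the set $WS(T)$ of weakly supercyclic vectors as $\bigcap_{k}G_{k}$ with $G_{k}=\{z\in H:\exists n,\lambda,\ \lambda T^{n}z\in U_{k}\}$. For each $n$ and $\lambda$ the map $z\mapsto\lambda T^{n}z$ is norm-continuous from $H$ to $H$, and $U_{k}$, being weakly open, is norm-open; hence $G_{k}$ is a union of norm-open sets, so norm-open. Norm-density of $G_{k}$ is immediate from $D\subset G_{k}$ (the hypothesis on $T$) together with the norm-density of $D$. Baire's theorem on the complete metric space $(H,\|\cdot\|)$ then yields that $WS(T)$ is residual.
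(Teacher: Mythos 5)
Your final paragraph (norm-residuality of the weakly supercyclic vectors of a fixed $T\in L_{wSC}(D)$) matches the paper's argument and is correct. The gap is in the main part. You correctly identify the crux --- $T\mapsto T^{n}x$ is not WOT-continuous for $n\ge 2$ --- but you do not resolve it, and the resolution you gesture at does not work. The scalar $\lambda$ only makes the target condition open \emph{in $\C^{m}$}: the set of tuples $(\langle T^{n}x,v_{i}\rangle)_{i}$ whose complex line meets the polydisc is indeed open there, but $B_{x,k}$ is the preimage of that open set under the map $T\mapsto(\langle T^{n}x,v_{i}\rangle)_{i}$, which is exactly the map that fails to be WOT-continuous. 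The projective freedom does nothing to repair that discontinuity, so openness of $B_{x,k}$ does not follow, and in fact the paper never proves the sets $M_{j}$ are open. What it proves instead is that the \emph{interior} of each $M_{j}$ is dense, which still suffices for Baire. The missing idea is to build the approximant $S$ with the additional property $\|S^{l+1}x\|=\|S^{l}x\|$ for all $l<N$ (Lemma~\ref{lemma:approx-weak-hypercyclic} and Corollary~\ref{cor:approx-weak-supercyclic}, achieved by adjoining to $T$ a chain of partial isometries $V\to W_{1}\to\cdots\to W_{N}$ on subspaces orthogonal to the test vectors $y_{j}$, using positivity of $\langle(R^{2}\Id-T^{*}T)\cdot,\cdot\rangle$). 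Then for any $S_{k}\to S$ weakly with $\|S_{k}\|\le 1$, the estimate $\|(S_{k}-S)y\|^{2}\le 2\|y\|^{2}-2\Re\langle S_{k}y,Sy\rangle\to 0$ at $y=S^{l}x$ upgrades weak to norm convergence along the orbit of $x$, and induction gives $S_{k}^{N}x\to S^{N}x$ in norm, so $S$ lies in the interior of $M_{j}$. Without this (or some substitute), your decomposition $L_{wSC}(x)=\bigcap_{k}B_{x,k}$ does not deliver comeagerness.

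Two further points. Your density argument (``insert a weakly mixing building block \dots\ arrange iteratively'') is only a sketch; the paper obtains density from an explicit finite-rank perturbation that sends $S^{N}x$ onto $z$ modulo the span of the $y_{j}$ while moving each $\langle Tx_{j},y_{j}\rangle$ by less than $\epsilon$, and the same construction is what carries the isometry-along-the-orbit property needed above, so the two halves cannot really be separated. Also, your explanation of why $R>0$ suffices is off: it is not the coarseness of the WOT but the scalar in supercyclicity --- one applies the $R>1$ construction and divides $S$ by $R$, absorbing the factor into $\lambda$ (cf.\ Corollary~\ref{cor:one-strong}, where $R>0$ already suffices in the strong topology).
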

By a result of Eisner \cite[Theorem 2.2]{MR2769030} a weakly typical operator in $B_1$ is also unitary.
Theorem~\ref{thm:one-weak} improves \cite[Theorem 8.26]{MR1719722} in that the typical operator is not just cyclic, but admits a \emph{prescribed} set of weakly supercyclic vectors.

Restricting attention to the set of unitary operators we recover denseness of cyclic operators in the norm topology.
\begin{theorem}
\label{thm:norm-unitary}
The set $L_{C}$ of cyclic unitary operators on $H$ is norm dense in the set of unitary operators.
\end{theorem}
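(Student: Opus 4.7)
The plan is a direct spectral argument, essentially independent of the earlier theorems. Given a unitary $U$ on $H$ and $\epsilon>0$, I would first approximate $U$ in operator norm by a diagonal unitary with pairwise distinct eigenvalues, and then verify that any such operator is cyclic via the F.\ and M.\ Riesz theorem.

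For the approximation step, partition the unit circle $\mathbb{T}$ into finitely many Borel arcs $I_1,\dots,I_N$ of diameter less than $\epsilon$, let $P_k$ denote the spectral projection of $U$ onto $I_k$, and decompose $H=\bigoplus_k P_kH$. On each nontrivial summand choose an orthonormal basis $(e_{k,j})_j$ together with pairwise distinct points $z_{k,j}\in I_k$, arranged so that all $z_{k,j}$ are globally distinct across the indices $(k,j)$, and set $V e_{k,j}=z_{k,j} e_{k,j}$. Both $U$ and $V$ preserve this decomposition, and the restrictions of $U$ and $V$ to $P_kH$ each have spectrum inside $I_k$, so block-by-block $\|U-V\|\le\epsilon$.

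For the cyclicity step, enumerate the global eigenbasis and eigenvalues of $V$ as $(e_n,\lambda_n)$ and fix $x=\sum_n c_n e_n$ with all $c_n\neq0$. If $y=\sum_n a_n e_n$ is orthogonal to $\lin\{V^m x:m\ge0\}$, then the discrete Borel measure $\mu=\sum_n \overline{a_n}c_n\,\delta_{\lambda_n}$ on $\mathbb{T}$ satisfies $\int z^m\,d\mu=0$ for every $m\ge0$. The F.\ and M.\ Riesz theorem then forces $\mu$ to be absolutely continuous with respect to Lebesgue measure, and since $\mu$ is purely atomic this gives $\mu=0$; using $c_n\neq0$ yields $a_n=0$ for all $n$, so $y=0$ and $x$ is cyclic.

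The main subtlety is precisely this cyclicity step: positive powers of $V$ provide only one-sided Fourier information about $\mu$, so Stone--Weierstrass on $\sigma(V)\subset\mathbb{T}$ is not enough to conclude density of the orbit's linear span, and one genuinely needs the F.\ and M.\ Riesz theorem to upgrade one-sided vanishing to the full vanishing of $\mu$.
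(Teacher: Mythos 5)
Your proposal is correct, and while the first step coincides with the paper's, the key cyclicity argument is genuinely different. The approximation step is the same reduction the paper makes (it cites Eisner--Ser\'eny for passing to a unitary with an orthonormal eigenbasis and pairwise distinct eigenvalues); your arc-partition construction is a valid self-contained version, modulo taking arcs of diameter $\epsilon/2$ so that comparing both restrictions to a common scalar $z_k\Id$ on $P_kH$ gives $\|U-V\|\le\epsilon$ by the triangle inequality. For cyclicity, the paper constructs one explicit cyclic vector $y=\sum_j b_je_j$ whose coefficients decay rapidly relative to $a_N=(N+1)!\,|\det\Lambda_N|^{-1}$, where $\Lambda_N$ is the Vandermonde matrix of the eigenvalues, and verifies by a direct estimate (using $|\lambda_i|=1$ to bound the entries of $\Lambda_N^{-1}$) that every $e_k$ lies in the closure of $\lin\{y,Uy,\dots\}$. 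You instead prove the stronger statement that \emph{every} $x=\sum_n c_ne_n$ with all $c_n\neq 0$ is cyclic, by duality: a vector orthogonal to the span of the forward orbit produces a finite (by Cauchy--Schwarz) purely atomic measure $\mu$ on the circle with one-sided vanishing Fourier coefficients, and the F.~and M.~Riesz theorem forces $\mu\ll$ Lebesgue, hence $\mu=0$. You correctly identify the genuine subtlety --- only non-negative powers are available, so Stone--Weierstrass would only give $*$-cyclicity --- and your route is shorter and yields a much larger set of cyclic vectors, at the price of invoking a nontrivial theorem of harmonic analysis; the paper's route is elementary and quantitative but exhibits only a single cyclic vector. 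Both arguments use unitarity essentially: yours through the circle-specific F.~and M.~Riesz theorem, the paper's through the bound $|\lambda_i|=1$ on the Vandermonde entries.
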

Finally, a residuality result holds for the norm topology on a finite-dimensional space.
\begin{theorem}
\label{thm:norm}
Let $X$ be a finite-dimensional Banach space and $E \subset X\setminus\{0\}$ a countable subset.
Then the subset $L_C(E)$ of operators $T$ for which every $x\in E$ is cyclic is dense $G_\delta$ in the set $L(X)$ for the norm topology.
\end{theorem}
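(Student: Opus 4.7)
The plan is to reduce cyclicity of a single vector to a polynomial non-vanishing condition in the matrix entries of $T$, and then apply the Baire category theorem. Set $n = \dim X$ and fix any basis to identify $L(X)$ with a finite-dimensional Euclidean space. Since the first $n$ iterates already span $X$ as soon as the full orbit does, a nonzero $x \in X$ is cyclic for $T$ if and only if the vectors $x, Tx, \ldots, T^{n-1}x$ are linearly independent; writing them as columns of an $n \times n$ matrix, this is equivalent to
\[
p_x(T) := \det\bigl[\, x \mid Tx \mid \cdots \mid T^{n-1}x \,\bigr] \neq 0.
\]
Since $p_x$ is a polynomial in the coordinates of $T$, the set $L_C(\{x\})$ is the complement of a (real) algebraic variety in $L(X)$; provided $p_x$ is not identically zero, this variety is closed and nowhere dense, so $L_C(\{x\})$ is open and dense in the norm topology.

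To verify that $p_x$ is not the zero polynomial, I would extend $x$ to a basis $x = e_1, e_2, \ldots, e_n$ of $X$ and take $T_0$ to be the nilpotent shift defined by $T_0 e_i = e_{i+1}$ for $i < n$ and $T_0 e_n = 0$. Then $T_0^i x = e_{i+1}$ for $0 \leq i < n$, the orbit of $x$ under $T_0$ spans $X$, and therefore $p_x(T_0) \neq 0$.

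The conclusion then follows from Baire's theorem: $L_C(E) = \bigcap_{x \in E} L_C(\{x\})$ is a countable intersection of dense open subsets of the complete metric space $L(X)$ endowed with the operator-norm topology. In contrast to the infinite-dimensional theorems of the paper, the main content here is simply the polynomial structure of the cyclicity criterion in finite dimension, so there is no essential analytic obstacle beyond the initial observation that a finite truncation of the orbit suffices.
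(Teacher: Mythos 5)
Your proof is correct, and it takes a genuinely different route from the paper's. The paper derives Theorem~\ref{thm:norm} as a byproduct of its perturbation lemma: starting from an arbitrary $T$, it adds a sequence of small rank-one corrections $\psi_k\otimes w_k$ forcing $x, T_kx,\dots,T_k^kx$ to stay linearly independent, observes that in finite dimensions this process must terminate with a cyclic operator near $T$, and notes that cyclicity of $x$ is an open condition; Baire then finishes the job. You instead exploit the algebraic structure available only in finite dimensions: by the standard stabilization argument for the subspaces $\lin\{x,\dots,T^{k-1}x\}$ (equivalently, Cayley--Hamilton), $x$ is cyclic precisely when $p_x(T)=\det\bigl[x\mid Tx\mid\cdots\mid T^{n-1}x\bigr]\neq 0$, and $p_x$ is a nonzero polynomial in the entries of $T$ (witnessed by the nilpotent shift extending $x$ to a Jordan-type basis), so each $L_C(\{x\})$ is the complement of a proper algebraic variety, hence open and dense --- in fact also conull, which is slightly more than the theorem asks for. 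Your argument is shorter and entirely self-contained for the finite-dimensional case, whereas the paper's construction has the advantage of making sense in infinite dimensions as well, where it still produces a linearly independent orbit even though no polynomial criterion is available. Both arguments conclude identically with the Baire category theorem applied to the countable intersection over $x\in E$.
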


\section{Strong operator topology}
We start with an approximation result that provides the density part of the statement of Theorem~\ref{thm:R-strong}.

\begin{lemma}
\label{lemma:approx-strong-hypercyclic}
Let $x_1, \dots, x_n \in H$ be vectors with norm $1$; $R>1$; $T \in B_R$; $x \in H \setminus \{0\}$, $z \in H$ further fixed vectors and $\epsilon > 0$.
Then there exists a finite rank operator $S \in B_R$ such that $||(S-T)x_j|| < \epsilon$ for all $j$ and $z = S^N x$ for some natural number $N=N(R,\epsilon,||x||,||z||)$.
\end{lemma}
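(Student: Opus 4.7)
The plan is to reduce the problem to a finite-dimensional construction inside $H$. Let $V \subseteq H$ be a finite-dimensional subspace containing $x_1, \dots, x_n, Tx_1, \dots, Tx_n, x, z$, enlarged so that $V$ also contains $N-1$ orthonormal vectors $e_1, \dots, e_{N-1}$ perpendicular to $W := \lin\{x_1, \dots, x_n\}$ (such vectors exist in $H$ because $H$ is infinite-dimensional). For any $A \in L(V)$, the finite-rank operator $S := A \circ P_V$ on $H$ satisfies $\|S\| = \|A\|$, $Sx_j = Ax_j$, and $S^k v = A^k v$ for every $v \in V$. The lemma thus reduces to finding $A \in L(V)$ with $\|A\| \le R$, $\|Ax_j - Tx_j\| < \epsilon$ for each $j$, and $A^N x = z$ for some $N = N(R, \epsilon, \|x\|, \|z\|)$.

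I would choose $N$ of order $\log_R(\|z\|/(\epsilon \|x\|))$, up to additive corrections, large enough that a weighted shift of length $N$ with weights $\le R$ can amplify an $\epsilon\|x\|$-sized seed past $\|z\|$. Then I define $A$ blockwise on $V = W \oplus \lin\{e_1, \dots, e_{N-1}\} \oplus U$ (where $U$ is a complementary subspace inside $V$): on $W$, set $Ax_j = Tx_j + \eta_j e_1$ with scalars $|\eta_j| < \epsilon$, tuned so that the induced ``injection'' of $x$ into the $e_1$-direction has modulus at least $\epsilon\|x\|$ (for orthonormal $x_j$'s this can be achieved by taking $\eta_j \propto \overline{a_j}/|a_j|$ with $a_j = \langle x, x_j\rangle$, making $\sum_j a_j \eta_j = \epsilon\|a\|_1 \ge \epsilon\|x\|$); on $\lin\{e_i\}$, let $A$ act as the shift $e_i \mapsto R e_{i+1}$ for $i < N-1$ and $Ae_{N-1} = c$ with $c \in V$ solved for at the end; on $U$ put $A = 0$. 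A direct induction, exploiting the orthogonality of the $e_i$ to $W$ and $T(W)$, shows that $A^N x = y_N + \alpha c$, where $y_N \in V$ is a $T$-driven vector of norm $\le R^N\|x\|$ and $\alpha \sim \epsilon R^{N-2}\|x\|$ is the amplified shift coefficient; setting $c = (z - y_N)/\alpha$ then gives $A^N x = z$.

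The main obstacle is to guarantee $\|c\| \le R$ (and hence $\|A\| \le R$). Since $\|y_N\|$ can be as large as $R^N\|x\|$, the naive estimate yields $\|c\| \lesssim R^2/\epsilon$, which exceeds $R$ when $\epsilon < R$. Overcoming this requires a finer construction of $A$: either distributing the $\epsilon$-slack across all $N$ iterations (perturbing $A|_W$ so that the cumulative small perturbations drive $(A|_W)^N x_W$ close to the $W$-component of $z$, at the cost of requiring $N \gtrsim R/\epsilon$), or designing $Ae_{N-1}$ to have a $W$-component that directly cancels $y_N$ rather than relying on the shift amplification to absorb it. Either way, for $N = N(R, \epsilon, \|x\|, \|z\|)$ chosen sufficiently large in terms of the stated parameters (of order $\log_R(\|z\|/(\epsilon\|x\|)) + R/\epsilon$), the finite-dimensional construction succeeds, and $S = A \circ P_V$ is the required finite-rank operator.
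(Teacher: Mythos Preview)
Your construction has a genuine gap at exactly the point you flag. With shift weight $R$ the amplified coefficient $\alpha$ is of order $\epsilon R^{N-2}\|x\|$, while the $T$-driven term $y_N$ can be of order $R^{N}\|x\|$; the ratio $\|y_N\|/\alpha$ is therefore of order $R^{2}/\epsilon$ \emph{independently of $N$}, and neither of your suggested fixes resolves this. ``Distributing the $\epsilon$-slack across all $N$ iterations'' does not change the fact that the shift and $T$ grow at the \emph{same} exponential rate $R$, so no amount of waiting lets the shift outrun the $W$-part. Having $Ae_{N-1}$ directly cancel $y_N$ is worse: that component would need norm $\sim R^{N}\|x\|$, destroying the norm bound. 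There are also secondary problems: the $x_j$ need not be linearly independent, so prescribing $Ax_j$ may be inconsistent; your injection estimate gives $\epsilon\|a\|_1 \ge \epsilon\|P_W x\|$, not $\epsilon\|x\|$, and this vanishes when $x\perp W$; and even on $W$ you already have $\|Ax_j\|^2=\|Tx_j\|^2+|\eta_j|^2$, which can exceed $R^2$.

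The paper's proof avoids all of this with two moves you are missing. First, it does not inject via an $\epsilon$-perturbation on the $x_j$; instead it perturbs so that $x$ has a component $\gamma e_0$ \emph{orthogonal} to $V=\lin\{x_j,Tx_j\}$, and lets the shift act purely on that orthogonal part. This way the shift does not touch the $x_j$ at all: $\tilde S x_j = Tx_j$ exactly. Second, and crucially, the shift weight is taken to be $R+\delta>R$, so that $(R+\delta)^M$ genuinely dominates $R^{M}$ for large $M$; the resulting operator has norm at most $R+3\delta$, and a final rescaling by $R/(R+3\delta)$ brings it into $B_R$ while introducing an error $\le 3\delta<\epsilon$ on each $x_j$. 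The target $z$ is pre-multiplied by $(1+3\delta/R)^{M+1}$ to absorb the rescaling. It is this ``overshoot and rescale'' trick, together with the orthogonal (rather than perturbative) injection, that makes the norm bound work.
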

\begin{proof}
Let $P$ be the orthogonal projection onto $\lin\{ x_1, \dots, x_n\}$.
Consider the finite dimensional linear subspace $V := PH + TPH \subset H$ and denote by $P_{V}$ the orthogonal projection onto $V$.
Perturbing the vectors $x_j$ and the operator $T$ in norm and lowering $\epsilon$, we can assume $||x-P_V x|| = \gamma > 0$.
Set $e_0 := (x-P_V x) / \gamma$ and decompose $H = V \oplus \ell^2$, so that $e_0$ is the first vector of an orthonormal basis $\{e_j\}$ of $\ell^2$.

Morally, we are looking for an approximation $\tilde T$ such that $||\tilde T^N e_0||$ grows faster than $||\tilde T|_V^N||$.
Then, after a sufficient number of steps, $\tilde T^N x$ can be sent to $z$ by a tiny perturbation.

Given $M \in \N$, $\delta\in\R$ and $\tilde z \in H$ that are to be chosen later we consider the operator $\Shift_M$ on $H = V \oplus \ell^2$ that maps $e_j$ to $e_{j+1}$ if $j < M$ and to $0$ if $j\geq M$ and that is zero on $V$.
Furthermore we set $\tilde T := TP + (R+\delta) \Shift_M$ and $\phi(y):=\langle y, e_M \rangle$.
Now we make the tiny perturbation
\[
\tilde S := \tilde T + \frac1{\gamma (R + \delta)^M} \phi \otimes (\tilde z - (TP)^{M+1} x),
\]
where the tensor product of a linear form $\phi$ and a vector $v$ is the rank one operator $(\phi\otimes v)(w)=\phi(w) v$.
For every $j$ we have $\tilde S x_j = T x_j$ since $x_{j} \in V$.
Furthermore,
\begin{align*}
\tilde S^{M+1} x
&=
\tilde S^{M+1} P_{V} x + \gamma \tilde S^{M+1} e_{0}\\
&=
(TP)^{M+1} P_{V} x + \gamma (R+\delta)^{M} \tilde S e_{M}\\
&=
(TP)^{M+1} x + (\tilde z - (TP)^{M+1} x)\\
&=
\tilde z
\end{align*}
The norm of $\tilde S$ is bounded by
\[
||\tilde S||
\leq
\max\{||T||, R+\delta \} + \frac{||\tilde z - (TP)^{M+1} x||}{\gamma (R + \delta)^M}
\leq
R+\delta + \frac{||\tilde z|| + R^{M+1} ||x||}{\gamma (R + \delta)^M}.
\]
Now it is sufficient to make the following choices.
\begin{itemize}
\item $\delta$ such that $3\delta < \epsilon$ and $R + \delta > 1 + 3 \delta / R$,
\item $M$ such that $\gamma (R + \delta)^M > R^{M+1} ||x|| / \delta$ and $\gamma (R + \delta)^M > ||z|| (1 + 3 \delta / R)^{M+1} / \delta$,
\item $\tilde z = z (1 + 3\delta / R)^{M+1}$.
\end{itemize}

Then $|| \tilde S || \leq R + 3 \delta$ and thus $S := \frac{R}{R + 3 \delta} \tilde S$ has norm less than or equal to $R$.
It also satisfies $S^N x = z$ with $N=M+1$ and
\[
||(S-T) x_j|| = ||(S - \tilde S) x_j|| = \frac{3 \delta}{R + 3\delta} ||\tilde S x_j|| \leq 3 \delta ||x_j|| < \epsilon.
\qedhere
\]
\end{proof}

Now we use the Baire category theorem to amplify this result.
\begin{proposition}\label{prop:residual-strong-hypercyclic}
Let $R>1$. Given a vector $x \in H \setminus \{0\}$, the set $L_{HC}(x)$ of operators on $H$ for which $x$ is a hypercyclic vector is dense $G_\delta$ in $B_R$ for the strong operator topology.
\end{proposition}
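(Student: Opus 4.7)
The plan is the standard Baire category argument for hypercyclicity, applied on the bounded set $B_R$ with Lemma~\ref{lemma:approx-strong-hypercyclic} providing the density step. Fix a countable dense sequence $\{y_k\} \subset H$ and write
\[
L_{HC}(x) \cap B_R = \bigcap_{k,m \in \N} U_{k,m}, \qquad U_{k,m} := \{ T \in B_R : \exists n \in \N, \ ||T^n x - y_k|| < 1/m \},
\]
since $x$ being hypercyclic for $T$ is equivalent to the orbit $\{T^n x\}$ meeting every ball $B(y_k, 1/m)$.

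First I would verify that each $U_{k,m}$ is SOT-open in $B_R$. For fixed $n$, the map $T \mapsto T^n x$ is SOT-continuous on the norm-bounded set $B_R$: by the telescoping identity $T_1^n - T_2^n = \sum_{j=0}^{n-1} T_1^{j}(T_1 - T_2)T_2^{n-1-j}$ applied to $x$, convergence $T_\alpha \to T$ in SOT together with the uniform bound $||T_\alpha||, ||T|| \leq R$ implies $T_\alpha^n x \to T^n x$ in norm. Hence $\{T \in B_R : ||T^n x - y_k|| < 1/m\}$ is SOT-open for each $n$, and $U_{k,m}$ is a union of such sets.

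Next I would establish SOT-density of each $U_{k,m}$. A basic SOT-neighborhood of an operator $T_0 \in B_R$ has the form $\{T : ||(T-T_0)u_j|| < \eta \text{ for } j=1,\dots,n\}$ for some vectors $u_j \in H$ and $\eta > 0$. After normalizing $x_j := u_j / ||u_j||$ (discarding any $u_j = 0$) and choosing $\epsilon := \eta / \max_j ||u_j||$, Lemma~\ref{lemma:approx-strong-hypercyclic} applied with data $(T_0, x_1,\dots,x_n, x, z := y_k, \epsilon)$ produces an operator $S \in B_R$ with $||(S-T_0) u_j|| < \eta$ for all $j$ and $S^N x = y_k$ for some $N$. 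This $S$ lies in the prescribed SOT-neighborhood and satisfies $||S^N x - y_k|| = 0 < 1/m$, so $S \in U_{k,m}$.

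Finally, since $B_R$ is SOT-closed in the Polish SOT-topology on $L(H)$ (and metrizable and complete on bounded sets), the Baire category theorem applies and $L_{HC}(x) \cap B_R = \bigcap_{k,m} U_{k,m}$ is dense $G_\delta$ in $B_R$. The only real content is Lemma~\ref{lemma:approx-strong-hypercyclic}; the remaining potential snag is the SOT-continuity of iteration, which is the reason for restricting to the norm ball $B_R$ rather than all of $L(H)$.
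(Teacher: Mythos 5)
Your proposal is correct and follows essentially the same route as the paper: decompose $L_{HC}(x)\cap B_R$ into a countable intersection of sets indexed by a countable base of the norm topology, prove SOT-openness using the uniform norm bound on $B_R$ (your telescoping identity just makes explicit what the paper states in terms of the seminorms generated by $x,\dots,T^{N-1}x$), prove density from Lemma~\ref{lemma:approx-strong-hypercyclic}, and conclude by Baire on the completely metrizable space $(B_R,\mathrm{SOT})$. No gaps.
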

\begin{proof}
Let $(O_j)_{j=1}^\infty$ be a countable base of the norm topology on $H$. Then
\[
L_{HC}(x) = \cap_j M_j, \text{ where } M_j = \cup_n \{ T: T^n x \in O_j \}.
\]
The sets $M_j$ are strongly dense in $B_{R}$ by Lemma~\ref{lemma:approx-strong-hypercyclic} with $z \in O_j$ and open since $T^{n}x \in O_j$ implies $S^{n}x \in O_{j}$ for all $S$ in a strong neighborhood of $T$ associated with the seminorms generated by $x, \dots, T^{N-1} x$ (note that we only consider a bounded set of operators).

Hence the set $L_{HC}(x)$ is $G_\delta$ in $B_R$.
The Baire category theorem implies that it is dense.
\end{proof}

\begin{proof}[Proof of Theorem~\ref{thm:R-strong}]
Since $D$ is countable, $L_{HC}(D) = \cap_{x \in D} L_{HC}(x)$ is dense $G_\delta$ by Proposition \ref{prop:residual-strong-hypercyclic} and the Baire category theorem.
\end{proof}

Note that our proof still works if we replace $H$ by an arbitrary Banach space that admits projections with norm arbitrarily close to one onto every finite-dimensional subspace.

We now prove the last statement in Corollary~\ref{cor:one-strong}.
Let $(O_j)_{j=1}^\infty$ be a countable base of the norm topology on $H$.
The set of supercyclic vectors of $T$ is given by
\[
\cap_{j=1}^\infty M_{j}, \text{ where } M_{j} = \cup_{n\in\N, a\in\C} \{x\in H: a T^{n}x \in O_{j} \}.
\]
Each $M_{j}$ is open because $T$ is continuous and dense because it contains $D$.
The Baire category theorem concludes the proof.

\section{Weak operator topology}
In principle, the switch to a coarser operator topology makes the approximation easier.
However, we need additional information in order to upgrade weak to strong convergence in the amplification process.
\begin{lemma}
\label{lemma:approx-weak-hypercyclic}
Let $x_1, \dots, x_n, y_1, \dots, y_n \in H$ be fixed vectors with norm $1$; $R>1$, $T \in B_R$, $x \in H\setminus\{0\}$, $z \in H$ further fixed vectors and $\epsilon > 0$.
Then there exists a finite rank operator $S \in B_R$ and a natural number $N=N(R,\epsilon,||x||,||z||)$ such that $|\langle (S-T)x_j, y_j \rangle| < \epsilon$ and $\langle z - S^N x, y_j \rangle = 0$ for every $j$ and $||S^{l+1} x|| = R ||S^l x||$ for every $l < N$.
\end{lemma}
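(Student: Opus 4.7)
The plan is to emulate the proof of Lemma~\ref{lemma:approx-strong-hypercyclic}: construct $S$ as an $R$-weighted shift augmented by a low-rank perturbation, with the new feature that the orbit of $x$ lies along an orthonormal sequence, thereby enforcing the exact norm growth $\|S^{l+1}x\| = R\|S^lx\|$.

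After absorbing a small perturbation of the $x_j$, $y_j$, and $T$ into $\epsilon$, I would choose an orthonormal family $f_0 = x/\|x\|$, $f_1, \ldots, f_N$ in the infinite-dimensional $H$ subject to $\langle f_l, x_k\rangle = 0$ for $l = 1, \ldots, N-1$ and all $k$, $\langle f_l, y_j\rangle = 0$ for $l = 2, \ldots, N-1$, $\langle f_1, y_j\rangle = \langle Tf_0, y_j\rangle/R$, and $\langle f_N, y_j\rangle = \langle z, y_j\rangle/(R^N\|x\|)$. These finitely many linear constraints are satisfiable in $H$: the prescribed inner products for $f_1$ are bounded by $1$ and those for $f_N$ are of order $R^{-N}$, so unit vectors realising them together with the orthonormality conditions exist provided $f_0$ is not in $\lin\{y_j\}$, which is arranged by the preliminary perturbation. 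With $F := \lin\{f_l : l = 0, \ldots, N-1\}$, the operator $S_0$ defined by $S_0 f_l := R f_{l+1}$ on $F$ and zero on $F^\perp$ has $\|S_0\| = R$ since both $\{f_l\}_{l=0}^{N-1}$ and $\{f_l\}_{l=1}^N$ are orthonormal, and the orbit $S_0^l x = R^l\|x\|f_l$ yields the norm growth condition and the target relation $\langle S_0^N x - z, y_j\rangle = 0$.

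For the weak approximation on $(x_j, y_j)$, set $r_j := x_j - \langle x_j, f_0\rangle f_0$, which lies in $F^\perp$ thanks to the orthogonality choices. Define the rank-$n$ correction $\Psi$ on $F^\perp$ by $\Psi r_j := \langle Tr_j, y_j\rangle y_j$, extended by zero to the orthogonal complement of $\lin\{r_j\}$ within $F^\perp$. Setting $S := S_0 + \Psi$, the orbit of $x$ is unaffected because $\Psi$ vanishes on $F$. A direct expansion using the $f_1$-condition gives $\langle Sx_j, y_j\rangle = \langle x_j, f_0\rangle\langle Tf_0, y_j\rangle + \langle Tr_j, y_j\rangle = \langle Tx_j, y_j\rangle$, so in fact $\langle(S-T)x_j, y_j\rangle = 0$.

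The main obstacle will be the operator-norm bound $\|S\| \leq R$. The supports of $S_0$ and $\Psi$ are orthogonal ($F$ versus $F^\perp$), and their images $\lin\{f_1, \ldots, f_N\}$ and $\lin\{y_j\}$ overlap only through the $O(R^{-N})$ inner products $\langle f_N, y_j\rangle$, so a decomposition of $v = v_F + v_{F^\perp}$ yields $\|Sv\|^2 \leq \|S_0 v_F\|^2 + \|\Psi v_{F^\perp}\|^2 + O(R^{-N})\|v\|^2$. The key bound $\|\Psi\| \leq R$ follows from the elementary estimate $|\langle Tr_j, y_j\rangle| \leq R\|r_j\|$ once the Gram matrix of the $r_j$ is well-conditioned, which is ensured by the preliminary perturbation. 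A final rescaling of $\Psi$ by a factor $1 - O(R^{-N})$ absorbs the small excess in the operator norm without affecting the orbit of $x$ (since $\Psi x = 0$), at the cost of a weak-approximation error of size $O(R^{-N})$, which is less than $\epsilon$ for $N = N(R, \epsilon, \|x\|, \|z\|)$ sufficiently large.
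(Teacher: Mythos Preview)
Your construction of the orbit along an orthonormal family $f_0,\dots,f_N$ is a reasonable idea, and the verification of the relations $\langle(S-T)x_j,y_j\rangle=0$ and $\langle z-S^Nx,y_j\rangle=0$ is fine. However, the argument for the crucial bound $\|S\|\le R$ contains two genuine gaps.

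First, you assert that the images of $S_0$ and $\Psi$ ``overlap only through the $O(R^{-N})$ inner products $\langle f_N,y_j\rangle$''. This contradicts your own construction: you have required $\langle f_1,y_j\rangle=\langle Tf_0,y_j\rangle/R$, which is $O(1)$, not $O(R^{-N})$. For $v=v_F+v_{F^\perp}$ with $v_F=\sum_{l=0}^{N-1}c_l f_l$ one has $S_0v_F=R\sum c_l f_{l+1}$, so the cross term $2\Re\langle S_0 v_F,\Psi v_{F^\perp}\rangle$ picks up a contribution $2R\,\Re\big(c_0\,\langle f_1,\Psi v_{F^\perp}\rangle\big)$ of size comparable to $R\,|c_0|\,\|\Psi\|\,\|v_{F^\perp}\|$, which is not dominated by any $O(R^{-N})$ term. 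Thus your decomposition does not yield $\|Sv\|^2\le R^2\|v\|^2+O(R^{-N})\|v\|^2$.

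Second, even granting orthogonal images, the estimate $\|\Psi\|\le R$ does not follow from $|\langle Tr_j,y_j\rangle|\le R\|r_j\|$ together with a well-conditioned Gram matrix of the $r_j$. Your operator sends $r_j\mapsto c_j y_j$ with $|c_j|\le R$; if the $y_j$ happen to be (nearly) equal, then for $v=\sum a_j r_j$ one gets $\|\Psi v\|=\big|\sum a_j c_j\big|$, which can be as large as $R\sqrt{n}\,\|v\|$. Conditioning of the $r_j$ alone cannot prevent this constructive interference on the output side, and the $y_j$ are data that you may only perturb by~$\epsilon$.

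The paper avoids both obstacles by a different mechanism. It first invokes Lemma~\ref{lemma:approx-strong-hypercyclic} to reduce to a finite-rank $T$ with $T^Nx=z$, $T(H)\subset V$, $T|_{V^\perp}=0$. The key point is then the defect form $[u,v]:=\langle (R^2\Id-T^*T)u,v\rangle$ on $V$: passing to the quotient $W$ by its kernel, the map $T\oplus S_{id}:V\to T(V)\oplus W$ is \emph{exactly} $R$ times an isometry. One then appends $R$-unitaries $S_l:W_{l-1}\to W_l$ between copies of $W$ chosen orthogonal to $V$ and to every $y_j$, and sets $S=T+S_{id}+S_2+\cdots+S_N$. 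Because each block is $R$-isometric with mutually orthogonal ranges, $\|S\|=R$ on the nose, no rescaling needed; the weak approximation holds since the added pieces take values orthogonal to all $y_j$.
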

 \begin{proof}
By Lemma~\ref{lemma:approx-strong-hypercyclic} we can assume that $z = T^N x$, that $T(H) \subset V$ and that $T$ vanishes on $V^\perp$, where $V$ is a finite-dimensional subspace.

Only the property $||S^{l+1} x|| = R ||S^l x||$ for $l < N$ is missing.

Observe that $[x,y]:=\langle (R^{2}\Id - T^* T)x, y \rangle$ is a positive semidefinite sesquilinear form on $V$.
Let $\tilde S_{id} : V \to W := (V, [\cdot,\cdot]) / \{x : [x,x]=0\}$ be the canonical projection onto the quotient of $V$ by the $[\cdot,\cdot]$-zero vectors.
Note that $(W,[\cdot,\cdot])$ is a finite-dimensional pre-Hilbert space, hence complete.
The operator $T \oplus \tilde S_{id} : V \to T(V) \oplus W$ is $R$ times an isometry since $\langle (T\oplus \tilde S_{id}) x, (T\oplus \tilde S_{id}) y \rangle_{T(V) \oplus W} = \langle Tx,Ty\rangle_{H} + [x,y] = R^{2}\langle x,y\rangle$.

Now select isometric copies $W_1, W_2, \dots, W_N \subset H$ of $W$ orthogonal to $V$, all of $y_i$, and to each other.
Let $S_{id} : V \to W_1$ be equivalent to $\tilde S_{id}$ (i.e.\ $S_{id} = U \tilde S_{id}$ with a unitary $U$) and $S_l : W_{l-1} \to W_l$ be $R$ times unitary for $2 \leq l \leq N$.
Extend all the $S_l$ to $H$ by letting them be $0$ on the subspaces orthogonal to their respective domains of definition.
Then $S:=T + S_{id} + S_2 + \dots + S_N$ still approximates $T$ weakly (since its values differ from those of $T$ only by vectors orthogonal to all of $y_j$) and $||S^{l+1} x|| = R ||S^l x||$ for $l < N$.
\end{proof}

\begin{corollary}
\label{cor:approx-weak-supercyclic}
Let $x_1, \dots, x_n, y_1, \dots, y_n \in H$ be fixed vectors with norm $1$; $T \in B_1$, $x \in H\setminus\{0\}$, $z \in H$ further fixed vectors and $\epsilon > 0$.
Then there exists a finite rank operator $S \in B_1$, a number $a \in \R$ and a natural number $N=N(\epsilon)$ such that $|\langle (S-T)x_j, y_j \rangle| < \epsilon$ and $\langle z - a S^N x, y_j \rangle = 0$ for every $j$ and $||S^{l+1} x|| = ||S^l x||$ for every $l < N$.
\end{corollary}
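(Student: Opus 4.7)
The plan is to reduce Corollary~\ref{cor:approx-weak-supercyclic} to the weak hypercyclic Lemma~\ref{lemma:approx-weak-hypercyclic} by a rescaling argument. Since $B_1$ admits no hypercyclic operators we cannot invoke the lemma directly; instead the scalar freedom $a$ afforded by supercyclicity will absorb both an auxiliary rescaling factor $R>1$ and, crucially, the norms $\|x\|$ and $\|z\|$, so that $N$ ends up depending only on $\epsilon$.

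After disposing of $z=0$ (take $a=0$, applying the construction below with a placeholder unit target), fix an auxiliary constant $R>1$, say $R=2$, and normalize $\hat x := x/\|x\|$, $\hat z := z/\|z\|$. Now apply Lemma~\ref{lemma:approx-weak-hypercyclic} to $RT \in B_R$ with the same test pairs $(x_j,y_j)$ but with starting vector $\hat x$, target $\hat z$, and error tolerance $R\epsilon$. The lemma yields a finite rank $\tilde S \in B_R$ with $|\langle (\tilde S-RT)x_j, y_j\rangle| < R\epsilon$, $\langle \hat z - \tilde S^N \hat x, y_j\rangle = 0$, and $\|\tilde S^{l+1}\hat x\| = R\|\tilde S^l \hat x\|$ for $l<N$. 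The lemma's parameter $N=N(R,R\epsilon,\|\hat x\|,\|\hat z\|)$ reduces to a function of $\epsilon$ alone because $R$ is fixed and $\|\hat x\|=\|\hat z\|=1$.

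Set $S := \tilde S/R \in B_1$ and $a := R^N \|z\|/\|x\| \in \R$. Three routine verifications finish the proof. First, $\tilde S/R - T = (\tilde S - RT)/R$, so dividing the first inequality by $R$ yields $|\langle (S-T)x_j, y_j\rangle| < \epsilon$. Second, iterating $\|\tilde S^l \hat x\|=R^l$ gives $\|S^l \hat x\|=1$, hence $\|S^l x\|=\|x\|$ for every $l$, which is the orbit-isometry condition. Third, $aS^N x = (R^N\|z\|/\|x\|)\tilde S^N x /R^N = \|z\|\,\tilde S^N \hat x$ agrees weakly with $\|z\|\hat z = z$ against each $y_j$, so $\langle z - a S^N x, y_j\rangle = \|z\|\langle \hat z - \tilde S^N \hat x, y_j\rangle = 0$.

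The only real obstacle is the clause $N=N(\epsilon)$: the lemma a priori produces $N$ growing with $\|x\|$ and $\|z\|$, ultimately because Lemma~\ref{lemma:approx-strong-hypercyclic} needs enough shift steps to absorb these norms into a small rank-one perturbation. The supercyclic scalar $a$ defuses this by allowing us to normalize $x$ and $z$ to unit vectors before invoking the lemma; the norm factors then reappear only inside $a$, where they are harmless.
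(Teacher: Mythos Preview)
Your proof is correct and follows the same rescaling idea as the paper: apply Lemma~\ref{lemma:approx-weak-hypercyclic} at some $R>1$ and divide the resulting operator by $R$. Your version is a bit more careful than the paper's one-line argument---you feed $RT$ rather than $T$ into the lemma (so $R$ need not be close to $1$) and you normalize $x$ and $z$ before invoking it, which is exactly what is needed to justify the stated dependence $N=N(\epsilon)$---but these are refinements of the same route, not a different one.
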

\begin{proof}
Apply Lemma~\ref{lemma:approx-weak-hypercyclic} with a small $R>1$ and divide the resulting approximation $S$ by $R$.
\end{proof}

\begin{proposition}
\label{prop:residual-weak-hypercyclic}
Given a vector $x \in H \setminus \{0\}$, the set $L_{wSC}(x)$ of operators on $H$ for which $x$ is weakly supercyclic is dense $G_\delta$ in $B_1$ for the weak operator topology.
\end{proposition}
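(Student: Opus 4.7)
The plan mirrors the proof of Proposition~\ref{prop:residual-strong-hypercyclic}. Fix a countable family of basic weakly open sets
\[
O_{j} = \{z \in H : |\langle z - z_{0,j}, w_{j,k}\rangle| < \epsilon_{j},\ k = 1,\dots,m_{j}\}
\]
with $z_{0,j}$ in a countable dense subset of $H$, $w_{j,k}$ in a countable dense subset of the unit sphere of $H$, and $\epsilon_{j}$ rational, chosen so that weak denseness of $\{a T^{n} x : a \in \C,\, n \in \N\}$ in $H$ is equivalent to meeting every $O_{j}$. Setting
\[
M_{j} = \bigcup_{n \in \N,\, a \in \C} \{T \in B_{1} : a T^{n} x \in O_{j}\},
\]
one has $L_{wSC}(x) = \bigcap_{j} M_{j}$. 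The goal is to show that each $M_{j}$ is WOT open and dense in $B_{1}$, and then to apply the Baire category theorem in the Polish space $(B_{1},\mathrm{WOT})$.

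Density of $M_{j}$ follows directly from Corollary~\ref{cor:approx-weak-supercyclic}. Given $T \in B_{1}$ and a basic WOT neighborhood of $T$ specified by norm-one pairs $(u_{i}, v_{i})$ and precision $\delta$, apply the corollary with pair list $(x_{l}, y_{l})$ consisting of the $(u_{i}, v_{i})$ together with auxiliary pairs $(x/\|x\|, w_{j,k})$ covering the test functionals of $O_{j}$, with target $z = z_{0,j}$ and sufficiently small $\epsilon$. The resulting $S$ lies in the specified WOT neighborhood of $T$ and satisfies $\langle z_{0,j} - a S^{N} x, w_{j,k}\rangle = 0$ for each $k$, so $a S^{N} x \in O_{j}$ and $S \in M_{j}$.

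The main obstacle is the WOT openness of $M_{j}$: since $S \mapsto S^{n} x$ is not WOT continuous for $n \geq 2$, the direct analogue of the argument from Proposition~\ref{prop:residual-strong-hypercyclic} fails. The plan is to exploit the extra property $\|S^{l+1}x\| = \|S^{l}x\|$ supplied by Corollary~\ref{cor:approx-weak-supercyclic}, together with the orthogonal block construction in Lemma~\ref{lemma:approx-weak-hypercyclic}: the increments of the witness orbit $x, Sx, \dots, S^{N}x$ lie in mutually orthogonal subspaces that can be arranged disjoint from the fixed functionals $w_{j,k}$, so that the telescoping expansion
\[
S^{n} x - T^{n} x = \sum_{l=0}^{n-1} S^{l}(S-T) T^{n-1-l} x,
\]
tested against each $w_{j,k}$, collapses to a finite sum of WOT-continuous seminorms of the form $|\langle (S-T) T^{l} x, (T^{*})^{m} w_{j,k}\rangle|$ up to controllable corrections. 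Appending these finitely many seminorms to the WOT neighborhood of a witness $T \in M_{j}$ should yield a WOT open subset of $M_{j}$ containing $T$. The delicate step will be to verify that the orthogonal structure genuinely eliminates the correction terms $((S^{*})^{l} - (T^{*})^{l}) w_{j,k}$, which are a priori only norm-bounded rather than WOT-small.
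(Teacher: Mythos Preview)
Your density argument is fine, but the openness argument has a genuine gap, and your proposed repair via the orthogonal block structure will not close it. In the telescoping identity you want to test $\langle S^{n}x - T^{n}x, w_{j,k}\rangle = \sum_{l} \langle (S-T)T^{n-1-l}x,(S^{*})^{l}w_{j,k}\rangle$, and the vectors $(S^{*})^{l}w_{j,k}$ depend on the unknown nearby operator $S$; the orthogonality built into the \emph{witness} operator says nothing about them. You flag this yourself as the ``delicate step,'' but there is no mechanism that makes $((S^{*})^{l}-(T^{*})^{l})w_{j,k}$ small in norm from WOT data alone.

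The paper sidesteps the issue entirely: it does \emph{not} prove that $M_{j}$ is WOT open (this is likely false), but only that the \emph{interior} of $M_{j}$ is dense, which suffices for Baire. The point is that the special witness $S$ produced by Corollary~\ref{cor:approx-weak-supercyclic} already lies in the interior of $M_{j}$. The isometry property $\|S(S^{l}x)\|=\|S^{l}x\|$ is used not for orthogonality bookkeeping but for the classical upgrade ``weak convergence plus attainment of the norm implies strong convergence'': if $S_{k}\to S$ in WOT with $\|S_{k}\|\le 1$, then for $v=S^{l}x$ one has
\[
\|S_{k}v-Sv\|^{2}\le 2\|v\|^{2}-2\Re\langle S_{k}v,Sv\rangle \to 0.
\]
Iterating with $\|S_{k}^{l}x-S^{l}x\|\le \|S_{k}\|\,\|S_{k}^{l-1}x-S^{l-1}x\|+\|S_{k}(S^{l-1}x)-S(S^{l-1}x)\|$ gives $S_{k}^{N}x\to S^{N}x$ in norm, hence $S_{k}\in M_{j}$ eventually. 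Replace your openness step by this argument and the proof goes through.
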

\begin{proof}
Let $(O_j)_{j=1}^\infty$ be a countable base of the weak topology on the open unit ball of $H$. Then
\[
L_{wSC}(x) = \cap_j M_j, \text{ where } M_j = \cup_{n\in\N,a\in\C} \{ T \in B_1 : a T^n x \in O_j \}.
\]

By the Baire category theorem it is sufficient to show that in the weak operator topology the interior of each $M_j$ is dense.
For this end assume that $O_j$ is the weak neighborhood of $z$ determined by $y_1,\dots, y_m$ and $\epsilon'$. 
We claim that for any $T$, any collection $x_1, \dots, x_n, y_{m+1}, \dots, y_n$, and any $\epsilon < \epsilon'$, the approximation $S$ given by Corollary~\ref{cor:approx-weak-supercyclic} is in the interior of $M_j$.
Indeed, assume the opposite.
Then there exists a sequence $\{S_k\}_{k=1}^\infty\subset B_{R} \setminus M_j$ converging to $S$ weakly.
Since $||S (S^{l} x)||=||S^{l} x||$ for every $l=0,\dots,N-1$ and $||S_{k}||\leq 1$, we see that
\begin{multline*}
||(S_k-S)(S^{l}x)||^2
=
||S_k (S^{l}x)||^{2} + ||S (S^{l}x)||^{2} - 2 \Re \langle S_{k}(S^{l}x), S(S^{l}x) \rangle\\
\leq
2 ||(S^{l}x)||^{2} - 2 \Re \langle S_{k}(S^{l}x), S(S^{l}x) \rangle
\to
2 ||(S^{l}x)||^{2} - 2 \Re \langle S(S^{l}x), S(S^{l}x) \rangle
=
0
\end{multline*}
as $k \to \infty$, cf.~\cite[Problems 20 and 21]{MR0208368}.
Induction on $l$ using the estimate
\[
\|S_k^l x-S^l x\| 
\leq
\|S_k\|\cdot\|S_k^{l-1}x-S^{l-1}x\| + \|S_k(S^{l-1} x) - S (S^{l-1}x)\|
\] 
shows that $\lim_{k\to\infty}\|S_k^l x-S^l x\|=0$ for $l=0,\dots, N$, and in particular $\lim_{k\to\infty} S_k^N x = S^N x$.
Thus all but finitely many $S_k$ belong to $M_j$, a contradiction.
\end{proof}

\begin{proof}[Proof of Theorem~\ref{thm:one-weak}]
It suffices to consider the case $R=1$.
Since $D$ is countable, $L_{wSC}(D) = \cap_{x \in D} L_{wSC}(x)$ is dense $G_\delta$ by Proposition \ref{prop:residual-weak-hypercyclic} and the Baire category theorem.

To show the last assertion of the theorem let $T \in L_{wSC}(D)$ and $(O_j)_{j=1}^\infty$ be a countable base of the weak topology on the open unit ball of $H$.
The set of weakly supercyclic vectors of $T$ is given by
\[
\cap_{j=1}^\infty M_{j}, \text{ where } M_{j} = \cup_{n\in\N,a\in\C} \{x\in H: aT^{n}x \in O_{j} \}.
\]
Each $M_{j}$ is open because $O_{j}$ is open in the norm topology and $T$ is continuous, and dense because $D \subset M_{j}$ by assumption. The Baire category theorem concludes the proof.
\end{proof}

\section{Norm topology}
Approximation of operators in the norm topology is harder than in the strong or in the weak operator topology.
We begin with the unitary case, where the spectral theorem reduces the task to a computation.

\begin{proof}[Proof of Theorem~\ref{thm:norm-unitary}]
Let $U$ be a unitary operator on $H$.
Using the spectral theorem we can approximate $U$ by unitary operators such that the Hilbert space $H$ admits an orthonormal basis $\{ e_i \}_{i = 0,1, \dots}$ of eigenvectors to pairwise distinct eigenvalues $\lambda_i$, cf.\ \cite[Prop. 2.1]{MR2431104}.
Thus we may assume that $U$ enjoys this property. We claim that $U$ is cyclic.

Denote by $\Lambda_N$ the Vandermonde matrix generated by $\lambda_0, \dots, \lambda_N$.
In the finite-dimensional case any vector that is not orthogonal to any of $e_{i}$'s is cyclic since the Vandermonde matrix $\Lambda_{\dim H-1}$ is invertible.

In the infinite-dimensional case let $a_N := (N+1)! |\det \Lambda_N|^{-1}$ and define a sequence $(b_{M})$ inductively starting with $b_0 = 1$ and setting
\[
b_{M+1} := \left( \min_{N \leq M} \frac12 \left( \frac{|b_N|^2}{a_N^2} - \sum\limits_{j = N+1}^M |b_j|^2 \right) \right)^{1/2}
\]
with the usual convention for the sum to be zero if the lower bound of summation is greater than the upper bound.
The sequence obtained in this way is strictly positive and rapidly decreasing in the sense that, for every $N$,
\[
\sum\limits_{j= N+1}^\infty b_j^2 \leq \frac {b_N^2}{a_N^2}.
\]
In particular, it is quadratically summable.
We claim that $y := \sum_{j=0}^\infty b_j e_j$ is a cyclic vector for $U$.
It is sufficient to approximate every $e_j$ by finite linear combinations of $U^k y$.
Observe that the partial sums $y_N = \sum_{j=0}^N b_j e_j$ satisfy
\[
(y_N, U y_N, \dots, U^N y_N) = (b_0 e_0, \dots, b_N e_N) \Lambda_N
\]
for every $N$. Hence, for every $k \leq N$, we have
\[
e_k = ((y_N, U y_N, \dots, U^N y_N) \Lambda_N^{-1})_k / b_k.
\]
Since the entries of $\Lambda_N$ have absolute value one, the elements of $\Lambda_N^{-1}$ are bounded by $N! / |\det \Lambda_N|$. Therefore
\begin{align*}
||e_k - (&(y, U y, \dots, U^N y) \Lambda_N^{-1})_k / b_k||^2\\
&=
|| \sum_{j=0}^N U^j (y_N - y) (\Lambda_N^{-1})_{jk} ||^2 / b_k^2\\
&\leq (N+1)^2 (N!)^2 |\det\Lambda_N|^{-2} ||y-y_N||^2 / b_k^2\\
&= a_N^2 \sum\limits_{j= N+1}^\infty b_j^2 / b_k^2\\
&\leq b_N^2 / b_k^2 \to 0 \text{ as } N \to \infty,
\end{align*}
and we are done because $((y, U y, \dots, U^N y) \Lambda_N^{-1})_k / b_k \in \lin\{y, Uy, \dots \}$.
\end{proof}

In the finite-dimensional case we merely need the first few elements of the orbit to be linearly independent to obtain cyclicity.
As the following lemma shows we can even make the full orbit linearly independent, at least in the infinite-dimensional case, though there is no guarantee that it shall span the whole space.

\begin{lemma}
Let $X$ be an infinite dimensional Banach space, $T \in L(X)$ a contraction, $x \in X\setminus\{0\}$ and $\epsilon > 0$.
Then there exists a contraction $S$ such that $||T-S|| < \epsilon$ and $x, Sx, S^2x, \dots$ are linearly independent.
\end{lemma}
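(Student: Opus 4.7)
The plan is to build $S$ as the operator-norm limit of a sequence $S_n$, starting from a slight shrinking of $T$, where each step adds a rank-one operator designed to extend the linearly independent orbit by one new vector. To have norm budget available, I first replace $T$ by $T' := (1 - \epsilon/3)T$, so $\|T - T'\| \le \epsilon/3$ and $\|T'\| \le 1 - \epsilon/3$; any further perturbation of $T'$ of total norm at most $\epsilon/3$ still produces a contraction within $\epsilon$ of $T$.

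Set $S_0 := T'$, $v_0 := x$, and proceed by induction maintaining the hypothesis that $v_0, \dots, v_n$ are linearly independent and $S_n^k x = v_k$ for $k \le n$. Given this data, use Hahn--Banach to pick a continuous $\phi_{n+1} \in X^*$ vanishing on $\lin\{v_0, \dots, v_{n-1}\}$ with $\phi_{n+1}(v_n) = 1$; this is possible since $v_n$ lies outside that span. Since $X$ is infinite dimensional, $\lin\{v_0, \dots, v_n, S_n v_n\}$ is a proper closed subspace, so I can choose a unit vector $u_{n+1}$ outside it, and then set $w_{n+1} := \delta_{n+1} u_{n+1}$ with $\delta_{n+1} := \epsilon / (3 \cdot 2^{n+2} \|\phi_{n+1}\|)$. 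Define $S_{n+1} := S_n + \phi_{n+1} \otimes w_{n+1}$ and $v_{n+1} := S_{n+1} v_n = S_n v_n + w_{n+1}$. The vanishing property of $\phi_{n+1}$ makes $S_{n+1} v_k = S_n v_k = v_{k+1}$ for every $k < n$, so the orbit identity persists up to $k = n+1$, while the choice of $u_{n+1}$ guarantees $w_{n+1} \notin \lin\{v_0, \dots, v_n, S_n v_n\}$, forcing $v_{n+1} \notin \lin\{v_0, \dots, v_n\}$, so the independent family grows.

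Since $\|S_{n+1} - S_n\| \le \|\phi_{n+1}\| \|w_{n+1}\| \le \epsilon / (3 \cdot 2^{n+2})$, the perturbations are summable and $S_n$ converges in operator norm to some $S$ with $\|S - T'\| \le \epsilon/3$; thus $\|S - T\| < \epsilon$ and $\|S\| \le \|T'\| + \epsilon/3 \le 1$, so $S$ is a contraction. Norm continuity of iteration on bounded sets gives $S^k x = \lim_n S_n^k x = v_k$ for every $k$, so the orbit $(S^k x)_{k \ge 0}$ coincides with $(v_k)_{k \ge 0}$ and is linearly independent by construction. I do not foresee a genuine obstacle: Hahn--Banach supplies the functionals, infinite-dimensionality supplies the fresh direction $u_{n+1}$, and the initial rescaling absorbs the perturbation budget. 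The only real bookkeeping point is to divide $\delta_{n+1}$ by $\|\phi_{n+1}\|$ so that the per-step norm bound is genuinely summable irrespective of how close $v_n$ happens to sit to the previous span.
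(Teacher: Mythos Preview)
Your argument is correct and follows essentially the same route as the paper: shrink $T$ to gain a norm budget, then at each stage add a rank-one operator $\phi_{n+1}\otimes w_{n+1}$ with $\phi_{n+1}$ vanishing on the previous orbit so that the first $n$ iterates are frozen, while $w_{n+1}$ is chosen outside $\lin\{v_0,\dots,v_n,S_n v_n\}$ to force the new iterate to be independent; summable perturbation sizes give a norm limit $S$ whose orbit is eventually constant at each index. The only cosmetic differences are your use of $\epsilon/3$ versus the paper's $\epsilon/2$ and your more explicit verification that $S^k x = v_k$ in the limit.
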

\begin{proof}
Replacing $T$ by $T(1-\epsilon/2)$ and $\epsilon$ by $\epsilon/2$ we may assume that $||T||\leq 1-\epsilon$.

Let $T_0 := T$ and construct a sequence of operators $T_k$ inductively.
Given $T_k$, define $V_k := \lin \{T_k^0 x, \dots, T_k^{k-1} x\}$, assume $T_k^k x \not\in V_k$ and choose a $\psi_k \in X'$ such that $\psi_k|_{V_k} = 0$, $\psi_k (T_k^k x) = 1$.
Choose $w_k \not\in V_k + \lin \{ T_k^k x, T_k^{k+1} x \}$ with $||w_k|| < \epsilon/(2^{k+1} ||\psi_k||)$ and set $T_{k+1} := T_k + \psi_k \otimes w_k$.

Then $T_{k+1}^l x = T_k^l x$ for $l \leq k$ and $T_{k+1}^{k+1} x \not\in V_{k}+\lin\{T_{k}^{k}\} = V_{k+1}$.
The contractions $T_k$ converge in norm to a contraction $S$ such that $||T - S|| < \epsilon$.
The orbit of $x$ under $S$ is linearly independent by construction.
\end{proof}
In the finite-dimensional case this construction breaks down because
\[
V_k + \lin \{ T_k^k x, T_k^{k+1} x \} = X
\]
for some $k$, so that $T_{k}$ is in fact a cyclic contraction.
Furthermore, any operator in a neighborhood of $T_{k}$ is also cyclic.
Hence the subset $L_C(x) \subset L(X)$ of operators for which $x$ is cyclic is open and dense in $L(X)$ equipped with the norm topology.
An application of the Baire category theorem gives Theorem~\ref{thm:norm}.

\section*{Acknowledgement}
The author would like to thank Tanja Eisner for bringing the matter to his attention and for helpful discussions.

\begin{bibdiv}
\begin{biblist}

\bib{MR1319961}{article}{
      author={Ansari, Shamim~I.},
       title={Hypercyclic and cyclic vectors},
        date={1995},
        ISSN={0022-1236},
     journal={J. Funct. Anal.},
      volume={128},
      number={2},
       pages={374\ndash 383},
         url={http://dx.doi.org/10.1006/jfan.1995.1036},
      review={\MR{MR1319961 (96h:47002)}},
}

\bib{MR2317538}{article}{
      author={Bayart, F.},
      author={Matheron, {\'E}.},
       title={How to get common universal vectors},
        date={2007},
        ISSN={0022-2518},
     journal={Indiana Univ. Math. J.},
      volume={56},
      number={2},
       pages={553\ndash 580},
         url={http://dx.doi.org/10.1512/iumj.2007.56.2863},
      review={\MR{MR2317538 (2008f:47008)}},
}

\bib{MR2533318}{book}{
      author={Bayart, Fr{\'e}d{\'e}ric},
      author={Matheron, {\'E}tienne},
       title={Dynamics of linear operators},
      series={Cambridge Tracts in Mathematics},
   publisher={Cambridge University Press},
     address={Cambridge},
        date={2009},
      volume={179},
        ISBN={978-0-521-51496-5},
         url={http://dx.doi.org/10.1017/CBO9780511581113},
      review={\MR{MR2533318}},
}

\bib{MR1555175}{article}{
      author={Birkhoff, George~D.},
       title={Surface transformations and their dynamical applications},
        date={1922},
        ISSN={0001-5962},
     journal={Acta Math.},
      volume={43},
      number={1},
       pages={1\ndash 119},
         url={http://dx.doi.org/10.1007/BF02401754},
      review={\MR{MR1555175}},
}

\bib{MR2769030}{article}{
      author={Eisner, Tanja},
       title={A ``typical'' contraction is unitary},
        date={2010},
        ISSN={0013-8584},
     journal={Enseign. Math. (2)},
      volume={56},
      number={3-4},
       pages={403\ndash 410},
         url={http://www.fa.uni-tuebingen.de/members/talo/unitary.pdf},
      review={\MR{2769030}},
}

\bib{em2010}{unpublished}{
      author={Eisner, Tanja},
      author={M{\'a}trai, Tam{\'a}s},
       title={On typical properties of {H}ilbert space operators},
        date={2010},
         url={http://arxiv.org/abs/1008.3326},
        note={Preprint, see \url{http://arxiv.org/abs/1008.3326}},
}

\bib{MR2431104}{article}{
      author={Eisner, Tanja},
      author={Ser{\'e}ny, Andr{\'a}s},
       title={Category theorems for stable operators on {H}ilbert spaces},
        date={2008},
        ISSN={0001-6969},
     journal={Acta Sci. Math. (Szeged)},
      volume={74},
      number={1-2},
       pages={259\ndash 270},
      review={\MR{2431104 (2009i:47027)}},
}

\bib{MR1685272}{article}{
      author={Grosse-Erdmann, Karl-Goswin},
       title={Universal families and hypercyclic operators},
        date={1999},
        ISSN={0273-0979},
     journal={Bull. Amer. Math. Soc. (N.S.)},
      volume={36},
      number={3},
       pages={345\ndash 381},
         url={http://dx.doi.org/10.1090/S0273-0979-99-00788-0},
      review={\MR{MR1685272 (2000c:47001)}},
}

\bib{MR0011173}{article}{
      author={Halmos, Paul~R.},
       title={In general a measure preserving transformation is mixing},
        date={1944},
        ISSN={0003-486X},
     journal={Ann. of Math. (2)},
      volume={45},
       pages={786\ndash 792},
      review={\MR{0011173 (6,131d)}},
}

\bib{MR0208368}{book}{
      author={Halmos, Paul~R.},
       title={A {H}ilbert space problem book},
   publisher={D. Van Nostrand Co., Inc., Princeton, N.J.-Toronto, Ont.-London},
        date={1967},
      review={\MR{0208368 (34 \#8178)}},
}

\bib{MR1719722}{book}{
      author={Nadkarni, M.~G.},
       title={Spectral theory of dynamical systems},
      series={Birkh\"auser Advanced Texts: Basler Lehrb\"ucher. [Birkh\"auser
  Advanced Texts: Basel Textbooks]},
   publisher={Birkh\"auser Verlag},
     address={Basel},
        date={1998},
        ISBN={3-7643-5817-3},
      review={\MR{MR1719722 (2001d:37001)}},
}

\bib{MR0024503}{article}{
      author={Rohlin, V.},
       title={A ``general'' measure-preserving transformation is not mixing},
        date={1948},
     journal={Doklady Akad. Nauk SSSR (N.S.)},
      volume={60},
       pages={349\ndash 351},
      review={\MR{MR0024503 (9,504d)}},
}

\bib{MR2545665}{article}{
      author={Sanders, Rebecca},
       title={Common hypercyclic vectors and the hypercyclicity criterion},
        date={2009},
        ISSN={0378-620X},
     journal={Integral Equations Operator Theory},
      volume={65},
      number={1},
       pages={131\ndash 149},
         url={http://dx.doi.org/10.1007/s00020-009-1711-0},
      review={\MR{MR2545665 (2010i:47019)}},
}

\end{biblist}
\end{bibdiv}

\end{document}